\documentclass[journal]{IEEEtran}

\usepackage{graphics} 
\usepackage{wrapfig} 
\usepackage{subfigure} 
\usepackage{picinpar} 
\usepackage{rotating} 
\usepackage{color}
\usepackage[final,notref]{showkeys} 
\usepackage{epsfig}
\usepackage{amsmath}
\usepackage{txfonts} 
\usepackage{mathdots}
\usepackage{latexsym}
\usepackage{amssymb}
\usepackage{amscd}
\usepackage{trsym}
\usepackage{fancyhdr}
\usepackage{cite}
\graphicspath{{../texfigures}}

\newcommand{\beq}{\begin{equation}}
\newcommand{\eeq}{ \end{equation} }
\newcommand{\bea}{\begin{eqnarray}}
\newcommand{\eea}{\end{eqnarray}}
\newcommand{\ba}{\begin{array}}
\newcommand{\ea}{\end{array}}
\newcommand{\no}{\nonumber}

\newcommand{\halmos}{\hskip\textwidth minus\textwidth \EndProof}

\newcommand{\dbyd}[2]{\ensuremath{\tfrac{\mathrm{d} {#1}}{\mathrm{d}%
{#2}}}}

\newcommand{\real}[1]{\text{Re}\left\{#1\right\}}

\newcommand{\float}[3]{\ensuremath{{#1},\kern-0.12em{#2}\cdot 10^{#3}}}
\newcommand{\double}[2]{\ensuremath{{#1},\kern-0.12em{#2}}}

\newcommand{\abs}[1]{\ensuremath{\left\lvert{#1} \right\rvert}}
\newcommand{\R}{\ensuremath{\mathbb{R}}}

\newcommand{\TT}{\ensuremath{\mathbb{T}}}
\newcommand{\drm}{\ensuremath{\mathrm{d}}}

\newcommand{\htt}{\ensuremath{\hat{t}}}

\def\EndProof{\hfill \quad $\Box$  }
\newenvironment{proof}{\noindent{\bf Proof}: }{\halmos}

\newtheorem{theorem}{Theorem}[section]

\newtheorem{remark}{Remark}[section]

\begin{document}

\title{A Multirate Transient Simulation Technique for Circuits described by Integro-Differential Equations}

\author{Hans Georg Brachtendorf and	Kai Bittner
	\thanks{H.~G.~Brachtendorf
		is with the University
		of Applied Sciences of Upper Austria, Hagenberg, e-mail: brachtd@fh-hagenberg.at, kai.bittner@zeiss.com.}}

\maketitle

\begin{abstract}
The multirate partial differential equation (PDE) technique is considered
as one of the most powerful methods for the simulation of radio frequency
(RF) circuits. The method reformulates the underlying ordinary differential
algebraic equation (DAE) by a PDE. The DAE solution is obtained along a 
characteristic curve of the PDE. Numerical solution techniques comprise 
(envelope modulated) Harmonic Balance (HB), multistep finite difference (FD)
and spline/wavelet techniques. All techniques have disadvantages when mixed
lumped/distributed devices occur and the waveforms exhibit pulse shaped behavior. 
Furthermore, communication systems are modeled and simulated by the Equivalent 
Complex Baseband (ECB) approach. Signals and impulse responses are described
 by 
their baseband counterparts, employing the Hilbert transformation. A coupled 
system/circuit simulation requires therefore novel numerical techniques for 
the solution of integro-differential equations.
\end{abstract}

\begin{IEEEkeywords}
RF circuit simulation, Multirate PDE technique, integro-differential equations, convolution integral,
Harmonic Balance, Equivalent Complex Baseband
\end{IEEEkeywords}

\IEEEpeerreviewmaketitle

\section{Introduction}

The multirate partial differential equation (PDE) technique \cite{BWL+96,Roy01,Roy97,NL96,PG02} is 
a powerful tool for simulating radio frequency (RF) circuits. The well-known 
multirate Harmonic Balance (HB) method is a special numerical solution technique
 when all signal waveforms
are in steady state and the signals are expanded in a trigonometric or Fourier basis with two or more
incommensurable fundamental frequencies. The advantage of HB lies in the fact, that distributed devices
are characterized by their impulse response or vice versa transfer characteristic
$h(t) \leftrightarrow H(\omega)$, whereas lumped elements are characterized in the time domain.
Moreover, when the signals behave quasi-sinusoidal, the trigonometric basis leads to small systems of
equations. 

However, in modern RF circuits the latter assumption is not valid anymore, since the signals
are often pulse modulated with short slew rate. Therefore the convergence of a Fourier
series is rather poor and one observes even worse the Gibbs phenomenon.
Therefore pure time domain techniques with
variable step sizes or spline/wavelet methods \cite{BiBra14a} lead to a more compact representation of the
waveforms, which require however a time domain representation of the devices. 

The multirate PDE technique can handle 
not only steady state problems, but initial value problems too.
Though envelope modulated HB techniques have been proposed, which always
presuppose a slowly varying envelope, which is a priori not guaranteed, pure time domain or spline/wavelet
techniques lead to better approximations of the waveforms \cite{BiBra14a}. However,  existing time domain 
and spline/wavelet methods
lack from the fact that they are limited to circuits with lumped devices, i.e.\ a time domain
representation of their characteristics.
The latter limitation can be overcome in part by model order reduction (MOR) techniques, which lead
however to a large increase of unknowns, since the transfer characteristic must be
approximated sufficiently accurate within a predefined frequency range. Another drawback appears in the
coupled simulation of circuits and communication systems. Communication systems are modeled in the
equivalent complex baseband  (ECB) \cite{proakis2001digital}, i.e., all signals and impulse responses which
exhibit bandpass (modulated) behavior are represented by their lowpass or baseband counterparts.
Indeed, the ECB technique can be considered as a multirate simulation approach in its own right,
which however presupposes linear time-invariant
systems, characterized by their impulse responses. To this end,
the multirate PDE technique must be generalized for integro-differential equations.

In this letter we develop a general framework for coupling the ECB method with the multirate PDE technique by
preserving the advantages of both techniques, i.e., the time domain or spline/wavelet representation
of pulse shaped signals, accurate representation of nonlinear devices including industry standards
for semiconductors, modeling of distributed devices and/or systems by their impulse responses or
transfer functions. Furthermore, the technique shall not only be applicable to steady state
but also to initial value problems for keeping the full capabilities of the PDE technique.

In Section~2 the multirate PDE technique is revisited, for more details we refer
the reader to \cite{BWL+96,BiBra14a,Roy01,NL96,PG02}, and further references within these papers. 
The 
necessary generalization to integro-differential equations is considered in Sec.~3. The Sec.~4
deals with non-autonomous circuits with a priori known and fixed center frequency and Sec.~5
with implementation aspects. Finally Sec.~6 deals with the relation of the novel
method to the classical ECB technique.

\section{The multirate PDE method}

We consider the differential algebraic equation (DAE) of dimension $N$
resulting from, e.g., the Modified Nodal Analysis (MNA) and the lumped device
models
\beq
\tfrac{d}{dt}q\big(x(t)\big)
        + i\big(x(t)\big) = s(t),\, x(0) = x_0 \label{eq.1}
\eeq
with a solution in the radio frequency range $x_{RF}$ exhibiting a 
high center or carrier
frequency and a slowly varying envelope, the baseband signal.
The multirate PDE formulation \cite{BWL+96,Roy01,NL96,PG02} 
essentially splits the envelope from the
carrier waveform\footnote{For ease of presentation only the bi-variate 
case is considered here.}
\begin{align}
&\tfrac{\partial}{\partial \tau} q\big(\hat{x}(\tau,\htt)\big)
+ \dbyd{(\omega(\tau)\,\tau)}{\tau}\,
  q\big(\hat{x}(\tau,\htt)\big)
+i\big(\hat{x}(\tau,\htt)\big)=\hat{s}\big(\tau,\htt\big), \no\\
&\quad \hat{x}(0,\Theta) = x_0(\Theta) \label{eq.2}
\end{align}
with instantaneous angular frequency $\omega(\tau)$, $\hat{x}: \R \times \mathbb{T}
\rightarrow \R^N$, where $\mathbb{T}$ represents the unit circle which means that the
solution is $2\,\pi$ periodic in the variable $\htt$. Moreover, we assume
 initial conditions $x_0(\Theta), \Theta \, \in \mathbb{T}$, $i,\,q: \R^N
\rightarrow \R^N$ and a unique
 solution $\hat{x}_{RF}$. 
 For driven circuits the frequency is known a priori, for the autonomous
 case the instantaneous frequency must be estimated \cite{BBL12a}.
 The characteristic curves, parametrized by $\Theta$, of the PDE are given by
 $(\tau,\,\htt) = (t,\omega(t)\,t + \Theta)$ with phase function
 \beq
  \varphi(t) := \omega(t)\,t \label{eq.2a}
 \eeq 
The solutions of the
 underlying ordinary DAE along the characteristic curves are denoted by 
 $x_\Theta(t) = \hat{x}_{RF}(t,\omega(t)\,t + \Theta)$
 for the initial conditions $x_0(\Theta)$.
  It is known that the PDE and DAE solutions coincide
along the specific characteristic curve with $\Theta=0$
iff $\hat{x}(0,0) = x_0$ and $\hat{s}\big(t,\omega(t)\,t\big) = s(t)$,
i.e., along the curve $(\tau,\,\htt) = (t,\omega(t)\,t)$.

\section{Integro-differential algebraic equations}

In RF circuit design, the lumped model assumption in \eqref{eq.1} above is
often not valid anymore.
Instead, devices such as transmission lines, filters, baluns etc.\ are characterized and
measured in the frequency domain. The lumped DAE formulation must therefore be
extended by a convolution integral, i.e.,
\beq
\tfrac{d}{dt}q\big(x(t)\big)
        + i\big(x(t)\big) + \int_{-\infty}^t h(t-t')\,x(t')\,\drm t' = s(t),
        \, x_\Theta(0) = x_0(\Theta) \label{eq.3}
\eeq
with impulse response $h$, in short $h * x$, where $h(t)\equiv 0 \, \forall t<0$.
Moreover, the convolution integral is commutative, i.e.,
\[
 (Y\,x)(t) := \int_{-\infty}^t h(t-t')\,x(t')\,dt' = \int_0^{\infty} h(t')\,x(t-t')\,\drm t'
\]
where $Y$ represents a linear operator $Y: \,(L^1(\R))^N \rightarrow (L^1(\R))^N$ and
$L^1$ is the space of absolutely integrable functions.
For the generalization of the bi-variate PDE formulation \eqref{eq.2}
 corresponding to \eqref{eq.3} we pursue the following ansatz
\begin{align}
	&\tfrac{\partial}{\partial \tau} q\big(\hat{x}(\tau,\htt)\big)
	+\dbyd{(\omega(\tau)\,\tau)}{\tau}\,
	 q\big(\hat{x}(\tau,\htt)\big)
	+i\big(\hat{x}(\tau,\htt)\big) \no\\
	& \quad +   (\hat{Y}\,\hat{x})(\tau,\htt)
	    = \hat{s}\big(\tau,\htt\big), 
	\quad \hat{x}(0,\Theta) = x_0(\Theta) \label{eq.4}
\end{align}
with a linear operator $\hat{Y}: \,(L^1(\R \times \TT))^N \rightarrow (L^1(\R \times \TT))^N$.

From the uni-variate convolution in \eqref{eq.3} one obtains along a characteristic curve
\begin{align*}
    y_\Theta(t) &:= \int_{-\infty}^t h(t-\tau')\,x_\Theta(\tau')\,\drm \tau'
    = \int_0^\infty h(\tau')\,x_\Theta(t-\tau')\,\drm \tau'
\end{align*}
and since the DAE and PDE solutions coincide along these curves
$
 x_\Theta(t) = \hat{x}(t,\,\omega(t)\,t+\Theta)
$,
it follows with \eqref{eq.2a}  that
\begin{align*}
    y_\Theta(t) &= \int_{-\infty}^t h(t-\tau')\,
       \hat{x}(\tau',\,\omega(\tau')\,\tau'+\Theta)\,\drm \tau'\\
       &= \int_0^\infty h(\tau')\,\hat{x}(t-\tau',\,\varphi(t-\tau')+\Theta)\,\drm \tau'
\end{align*}
Along the characteristic curve
\[
 y_\Theta(t) = \hat{y}(t,\,\omega(t)\,t+\Theta)
\]
must hold for the output signal too, since the DAE and PDE solutions must coincide

\begin{theorem}
For unmodulated waveforms $\omega(t)=\omega_0=\text{const.}$ we obtain
\begin{align}
\hat{y}(\tau,\htt) &=  \int_{-\infty}^\tau 
 {h}(\tau-\tau')\,\hat{x}(\tau',\htt-\omega_0\,(\tau-\tau'))\,\drm\tau' \no\\
&= \int_0^{\infty} 
  {h}(\tau')\,\hat{x}(\tau-\tau',\htt-\omega_0\,\tau')\,\drm\tau'
 \label{eq.5b}
\end{align}
\end{theorem}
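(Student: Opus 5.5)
We set $\omega(t)\equiv\omega_0$ in the expression for $y_\Theta$ just before the statement, and then read off a bivariate function whose restriction to every characteristic curve reproduces $y_\Theta$. With constant frequency the phase function \eqref{eq.2a} is linear, $\varphi(t)=\omega_0 t$. This gives
\[
y_\Theta(t)=\int_0^\infty h(\tau')\,\hat{x}\big(t-\tau',\,\omega_0 t-\omega_0\tau'+\Theta\big)\,\drm\tau' .
\]
The key identity is $\varphi(t-\tau')+\Theta=(\omega_0 t+\Theta)-\omega_0\tau'$. The shift in the periodic argument therefore depends only on the lag $\tau'$, and not on $t$ separately.

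Next we define $\hat{y}$ by the right-hand side of \eqref{eq.5b}, namely $\hat{y}(\tau,\htt)=\int_0^\infty h(\tau')\,\hat{x}(\tau-\tau',\htt-\omega_0\tau')\,\drm\tau'$, and check the required compatibility. Substituting $\tau=t$ and $\htt=\omega_0 t+\Theta$ gives exactly the displayed formula for $y_\Theta(t)$. Since $\hat{x}$ is $2\pi$-periodic in its second argument, $\hat{y}$ is well defined on $\R\times\TT$. As $(t,\Theta)$ ranges over $\R\times\TT$, the points $(t,\omega_0 t+\Theta)$ cover all of $\R\times\TT$. Hence $\hat{y}$ is uniquely determined by the condition $y_\Theta(t)=\hat{y}(t,\omega_0 t+\Theta)$ for all $\Theta$, and it coincides with the defined operator $\hat{Y}\hat{x}$.

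To obtain the first form in \eqref{eq.5b}, we substitute $\tau''=\tau-\tau'$ in the integral. The range $\tau'\in(0,\infty)$ becomes $\tau''\in(-\infty,\tau)$, and $\htt-\omega_0\tau'=\htt-\omega_0(\tau-\tau'')$. Renaming $\tau''$ back to $\tau'$ yields the stated expression. Linearity of $\hat{Y}$ and the mapping $L^1\to L^1$ follow from Fubini's theorem, using the causality and integrability of $h$.

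The main point needing care is uniqueness: we must justify that matching along all characteristics, rather than only $\Theta=0$, pins down $\hat{y}$. Coverage of $\R\times\TT$ by the family of characteristic curves settles this. The rest is routine substitution, with absolute convergence of the integrals assumed from $h\in L^1$ and $\hat{x}$ bounded.
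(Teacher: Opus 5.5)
Your proposal is correct and follows essentially the same route as the paper. The paper's proof consists only of inserting the characteristic curve $(\tau,\htt)=(t,\omega_0 t+\Theta)$ into \eqref{eq.5b} and recognizing $y_\Theta(t)$; you additionally make explicit two points it leaves implicit: the characteristics cover $\R\times\TT$, so matching along all of them determines $\hat{y}$ uniquely, and the substitution $\tau''=\tau-\tau'$ links the two integral forms.
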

\begin{proof}
    The theorem follows immediately from inserting the characteristic
    curve $(\tau,\,\htt) = (t,\,\omega_0\,t+\Theta)$ in \eqref{eq.5b}.
\end{proof}

\begin{remark}
One can see from (\ref{eq.5b})
that the convolution is performed along the characteristic curve.
\end{remark}

In what follows we investigate sufficient conditions for a bi-variate convolution
integral, pursuing the ansatz
\begin{align}
\hat{y}(\tau,\htt) &= \hat{h}(\tau,\htt)\; \circledast \;
\hat{x}(\tau,\htt) \no\\
 &:=  \int_{-\infty}^\tau  \int_\TT
\hat{h}(\tau-\tau',\htt-t')\,\hat{x}(\tau',t')\,\drm t'\,\drm\tau' \no\\
&= \int_0^{\infty} \int_\TT
\hat{h}(\tau',t')\,\hat{x}(\tau-\tau',\htt-t')\,\drm t'\,\drm\tau'
 \label{eq.5}
\end{align}
The inner integral is defined as cyclic convolution on the torus $\TT$, i.e., an integration on the unit
circle modulo $2\,\pi$, and
an a-cyclic convolution in $\tau$ as outer integral.
A necessary condition on $\hat{h}$ is derived next.

\begin{theorem}
 \label{theo.1}
 The DAE \eqref{eq.3} and PDE solutions \eqref{eq.4} coincide along the
 characteristic curves if the equalities
	\begin{align}
	 & h(t-\tau')\, \hat{x}(\tau',\,\omega(\tau')\,\tau'+\Theta) \no\\
	 &\qquad = \int_\TT
	      \hat{h}(t-\tau',\,\omega(t)\,t+\Theta-t')\,\hat{x}(\tau',\,t')\,\drm t'
	      \label{eq.theo.1}
	\end{align}
	likewise
	\begin{align}
	 & h(\tau')\,\hat{x}(t-\tau',\,\varphi(t-\tau')+\Theta) = \no\\
	 &\qquad \int_\TT\,
	      \hat{h}(\tau',\,t')\,\hat{x}(t-\tau',\,\omega(t)\,t+\Theta-t')\,\drm t'
	      \label{eq.theo.2}
	\end{align}
 hold.
\end{theorem}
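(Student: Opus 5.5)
The plan is to show that, under the stated pointwise identity, the bi-variate convolution \eqref{eq.5} evaluated on a characteristic curve reproduces the uni-variate convolution $y_\Theta(t)$. Once this holds, every term of the PDE \eqref{eq.4} restricted to the curve $(\tau,\htt)=(t,\varphi(t)+\Theta)$ matches the corresponding term of the integro-DAE \eqref{eq.3}.

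I would fix $\Theta$ and write the characteristic curve $(\tau,\htt)=(t,\omega(t)\,t+\Theta)$. First, I would use the already established fact from Section~2 (the multirate PDE case \eqref{eq.2}): restricted to this curve, the terms $\tfrac{\partial}{\partial\tau}q+\dbyd{\varphi}{\tau}\tfrac{\partial}{\partial \htt}q$ combine by the chain rule into $\tfrac{d}{dt}q(x_\Theta(t))$. Likewise, $i(\hat{x})$ and $\hat{s}$ restrict to $i(x_\Theta(t))$ and $s(t)$. So the only new ingredient to check is that $(\hat{Y}\hat{x})(t,\omega(t)t+\Theta)=y_\Theta(t)$, where $\hat{Y}\hat{x}=\hat{h}\circledast\hat{x}$ is defined by \eqref{eq.5}.

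Next, I would insert $(\tau,\htt)=(t,\omega(t)t+\Theta)$ into the first line of \eqref{eq.5}. This gives
\[
\hat{y}(t,\omega(t)t+\Theta)=\int_{-\infty}^t\int_\TT \hat{h}(t-\tau',\omega(t)t+\Theta-t')\,\hat{x}(\tau',t')\,\drm t'\,\drm\tau'.
\]
By hypothesis \eqref{eq.theo.1}, the inner integral equals $h(t-\tau')\,\hat{x}(\tau',\omega(\tau')\tau'+\Theta)$ for each $\tau'$. Then $\hat{x}(\tau',\omega(\tau')\tau'+\Theta)=x_\Theta(\tau')$, because $(\tau',\omega(\tau')\tau'+\Theta)$ lies on the same characteristic curve. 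The outer integral therefore becomes $\int_{-\infty}^t h(t-\tau')x_\Theta(\tau')\,\drm\tau'=y_\Theta(t)$.

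The variant \eqref{eq.theo.2} I would handle the same way, starting from the second line of \eqref{eq.5}. The substitution $\tau'\mapsto t-\tau'$ shows that \eqref{eq.theo.2} is just \eqref{eq.theo.1} rewritten, using $\varphi(t-\tau')=\omega(t-\tau')(t-\tau')$ and the cyclic shift invariance of the integral over $\TT$. This justifies the word ``likewise'' in the statement.

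Finally, I would conclude that $x_\Theta$ and $\hat{x}$ restricted to the curve satisfy the same integro-differential equation with the same initial value $\hat{x}(0,\Theta)=x_0(\Theta)$. Uniqueness of the solution of \eqref{eq.3}, which the paper assumes, then gives that the two solutions coincide. The main obstacle is justifying this last step. Coincidence of the equations on the curve does not by itself force the PDE solution to equal the DAE solution, because the convolution uses values of $\hat{x}$ at earlier times. I would argue that along the curve these earlier values are exactly those of $x_\Theta$, since the integrand only evaluates $\hat{x}$ on the same curve. Consequently, the restricted PDE is a closed integro-DAE for $t\mapsto\hat{x}(t,\varphi(t)+\Theta)$, and uniqueness applies. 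A secondary technical point is interchanging the $\tau'$- and $t'$-integrals, which I would justify by Fubini under the $L^1$ assumptions on $h$, $\hat{h}$ and $\hat{x}$.
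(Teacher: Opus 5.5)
Your proposal is correct and follows essentially the paper's route: evaluate the bivariate convolution \eqref{eq.5} on the characteristic curve $(t,\omega(t)\,t+\Theta)$ and match its $\tau'$-integrand against that of the univariate convolution $y_\Theta$. Your version is the more careful one, because it proves the direction the statement asserts (integrand identity $\Rightarrow$ equal convolutions on the curve $\Rightarrow$, via the chain rule and uniqueness, coinciding solutions) and checks that \eqref{eq.theo.1} and \eqref{eq.theo.2} are related by substitution, whereas the paper's ``from comparison one concludes'' reads as the converse and only asserts the equivalence.
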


\begin{proof}
From the uni-variate convolution one obtains
\begin{align*}
    y_\Theta(t) &= \int_{-\infty}^t h(t-\tau')\,x_\Theta(\tau')\,\drm \tau'
    = \int_0^\infty h(\tau')\,x_\Theta(t-\tau')\,\drm \tau'
\end{align*}
and since the DAE and PDE solutions coincide along the characteristic curves
$
 x_\Theta(t) = \hat{x}(t,\,\omega(t)\,t+\Theta)
$,
it follows that
\begin{align*}
    y_\Theta(t) &= \int_{-\infty}^t h(t-\tau')\,
       \hat{x}(\tau',\,\omega(\tau')\,\tau'+\Theta)\,\drm \tau'\\
       &= \int_0^\infty h(\tau')\,\hat{x}(t-\tau',\,\varphi(t-\tau')+\Theta)\,\drm \tau'
\end{align*}
Since
\[
 y_\Theta(t) = \hat{y}(t,\,\omega(t)\,t+\Theta)
\]
must hold along the characteristic curve for the output signal too, 
we obtain from the bi-variate convolution
\begin{align*}
    y_\Theta(t) &= \int_{-\infty}^t \int_\TT
      \hat{h}(t-\tau',\,\omega(t)\,t+
      \Theta-t')\,\hat{x}(\tau',\,t')\,\drm t'\,\drm \tau'\\
      &= \int_0^\infty \int_\TT\,
      \hat{h}(\tau',\,t')\,\hat{x}(t-\tau',\,\omega(t)\,t+\Theta-t')\,\drm t'\,\drm \tau'
\end{align*}
From comparison one concludes that
\begin{subequations}\label{eq.4a}
\begin{align}
 &  h(t-\tau')\, \hat{x}(\tau',\omega(\tau')\,\tau'+\Theta) = \no \\
 &\qquad  \int_\TT
      \hat{h}(t-\tau',\omega(t)\,t+\Theta-t')\,\hat{x}(\tau',t')\,\drm t' \\
 & \text{or equivalently} \no \\
 & h(\tau')\,\hat{x}(t-\tau',\,\varphi(t-\tau')+\Theta) = \no \\
 &\qquad  \int_\TT\,
      \hat{h}(\tau',\,t')\,\hat{x}(t-\tau',\,\omega(t)\,t+\Theta-t')\,\drm t'
\end{align}
\end{subequations}
holds.
\end{proof}


\section{The case $\omega(\tau) = \omega_0 = \text{const.}$}

In what follows the special case $\omega(\tau) = \omega_0 = \text{const.}$ is considered,
i.e.\ $\varphi(t) = \omega_0\,t$, which is valid for non-autonomous systems where the center
frequency is known a priori.  We restrict here the signals
to continuous functions, i.e., $Y: \,(C(\R))^N \rightarrow (C(\R))^N$.

\begin{theorem}
    \label{theo.2}
    The equalities \eqref{eq.4a} of Theorem~\ref{theo.1} are fulfilled if
	\beq
	 \hat{h}(\tau,\,\hat{t}) = h(\tau)\,\delta(\hat{t} - \omega_0\,\tau) \label{70}
	\eeq
	where $\delta$ is the periodic Dirac distribution.
\end{theorem}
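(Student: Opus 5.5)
We substitute the candidate kernel \eqref{70} directly into the right-hand side of the second form of \eqref{eq.4a} and use the sifting property of the periodic Dirac distribution on $\TT$. With $\omega(t)=\omega_0$ we have $\varphi(t)=\omega_0\,t$. The target identity is therefore
\[
 h(\tau')\,\hat{x}\big(t-\tau',\,\omega_0\,(t-\tau')+\Theta\big)
 = \int_\TT \hat{h}(\tau',t')\,\hat{x}\big(t-\tau',\,\omega_0\,t+\Theta-t'\big)\,\drm t'
\]
for all $t$, all $\tau'\ge 0$ and all $\Theta\in\TT$.

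Inserting $\hat{h}(\tau',t') = h(\tau')\,\delta(t'-\omega_0\,\tau')$ makes the factor $h(\tau')$ independent of $t'$, so it comes out of the inner integral. What remains is the cyclic integral $\int_\TT \delta(t'-\omega_0\tau')\,g(t')\,\drm t'$ with
\[
 g(t') := \hat{x}\big(t-\tau',\,\omega_0\,t+\Theta-t'\big).
\]
Since $\hat{x}$ is $2\pi$-periodic in its second argument, $g$ is a well-defined continuous function on $\TT$. Here we use the restriction to continuous signals stated at the start of this section, which is what makes the pairing with $\delta$ meaningful.

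The sifting property gives $g(\omega_0\tau' \bmod 2\pi)$. Periodicity lets us drop the reduction mod $2\pi$, so the right-hand side equals
\[
 h(\tau')\,\hat{x}\big(t-\tau',\,\omega_0 t+\Theta-\omega_0\tau'\big)
 = h(\tau')\,\hat{x}\big(t-\tau',\,\omega_0 (t-\tau')+\Theta\big),
\]
which is exactly the left-hand side. For the first form of \eqref{eq.4a} we either repeat the computation with $\hat{h}(t-\tau',\,\omega_0 t+\Theta-t') = h(t-\tau')\,\delta(\omega_0\tau'+\Theta-t')$, whose delta picks out $t'=\omega_0\tau'+\Theta$ directly, or we simply invoke the substitution $\tau'\mapsto t-\tau'$ relating the two forms, as in Theorem~\ref{theo.1}.

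The main obstacle is not the algebra but justifying the use of $\delta$. We must read $\delta$ as the $2\pi$-periodic Dirac comb $\sum_k \delta(\cdot-2\pi k)$, so that exactly one point mass lies in each period of $\TT$ regardless of how large $\omega_0\tau'$ is. We must also check that the inner integral is a legitimate distributional pairing, which holds because $g$ is continuous. As a consistency check, the resulting $\hat{y}$ reproduces formula \eqref{eq.5b} of the first theorem.
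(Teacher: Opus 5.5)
Your proposal is correct and follows the paper's proof. You insert the kernel \eqref{70} into both forms of \eqref{eq.4a} and apply the sifting property of the periodic $\delta$, and your direct computation for the first form, $\hat{h}(t-\tau',\omega_0 t+\Theta-t') = h(t-\tau')\,\delta(\omega_0\tau'+\Theta-t')$, is exactly the paper's second step. If you use the shortcut instead, note that relating the two forms needs, besides $\tau'\mapsto t-\tau'$, the change of variables $t'\mapsto \omega_0 t+\Theta-t'$ on $\TT$.
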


\begin{proof}
    From the sift property of Dirac's $\delta$ 
    distribution\footnote{Sift property: $\delta_T * x = x_T$.} it follows immediately
\begin{align*}
     &\int_\TT\,
      \hat{h}(\tau',\,t')\,\hat{x}(t-\tau',\,\omega_0\,t+\Theta-t')\,\drm t'\\
	 & \qquad = \int_\TT 
	 \delta(t'-\omega_0\,\tau')\,h(\tau')\,
	 \hat{x}(t-\tau',\,\omega_0\,t+\Theta-t')\,\drm t'\\
	 &\qquad =  h(\tau')\,\hat{x}(t-\tau',\,\omega_0\,(t-\tau')+\Theta)
\end{align*}
Likewise
\begin{align*}
    &\int_\TT
      \hat{h}(t-\tau',\omega_0\,t+\Theta-t')\,\hat{x}(\tau',t')\,\drm t'  \\
    &\qquad  = \int_\TT \delta(\omega_0\,\tau'+\Theta-t')\, h(t-\tau')\,
    \hat{x}(\tau',t')\,\drm t'  \\
    &\qquad = h(t-\tau')\, \hat{x}(\tau',\omega_0\,\tau'+\Theta)
\end{align*}
\end{proof}

Hence \eqref{eq.5} reads
\begin{align}
\hat{y}(\tau,\htt) &= \int_{-\infty}^\tau 
 {h}(\tau-\tau')\,\hat{x}(\tau',\htt - \omega_0\,(\tau-\tau'))\,\drm\tau' \no\\
&= \int_0^{\infty} 
  {h}(\tau')\,\hat{x}(\tau-\tau',\htt-\omega_0\,\tau')\,\drm\tau'
 \label{eq.5a}
\end{align}

Along the characteristic curve $(t,\,\omega_0\,t + \Theta)$ one gets 

\begin{align}
 y_\Theta(t) &= \hat{y}(t,\,\omega_0\,t+\Theta)
 =  \int_{-\infty}^\tau 
 {h}(t-\tau')\,\hat{x}(\tau',\omega_0\,\tau'  + \Theta)\,\drm\tau' \no\\
&= \int_0^{\infty} 
  {h}(\tau')\,\hat{x}(t-\tau',\omega_0\,(t-\tau') + \Theta)\,\drm\tau' 
  = h(t) * x_\Theta(t) \label{60}
\end{align}

\section{Implementation}

For the case of unmodulated waveforms or signals with compact spectra,
centered around a carrier frequency $f_0=\frac{\omega_0}{2\,\pi}=\text{const.}$,  
a numerically efficient method is developed next.
   
Let $\hat{x}(\tau,\htt)$ be 
expanded in a Fourier series in the variable $\htt$, i.e.\
\beq
 \label{eq.6}
 \hat{x}(\tau,\htt) = 
  \sum_{k} \hat{x}_k(\tau)\,\mathrm{e}^{{}\jmath \, k\, \htt} 
\eeq
The DAE solutions are $x_\Theta(t) = \hat{x}(t,\omega_0\,t + \Theta)$.
Moreover, $\hat{x}_{-k}(\tau) = \hat{x}^*_k(\tau)$ holds, where the asterisk
represents the complex conjugate.
The so called envelopes $\hat{x}_k(\tau)$ exhibit in practical multirate
simulations lowpass (band limited) behavior when the initial conditions
$x_0(\Theta)$ of \eqref{eq.4} are chosen properly, however
band limitation is not explicitly required here.
From Theorem~\ref{theo.2} and (\ref{eq.5b},\ref{eq.6}) one gets
\begin{align}
    \hat{y}(\tau,\htt) &= \sum_k \int_0^\infty h(\tau')\,
     \hat{x}_k(\tau-\tau')\, 
     \mathrm{e}^{\jmath \,k\, (\htt - \omega_0 \,\tau')}\,  \drm \tau' \no\\
     &= \sum_k \mathrm{e}^{{}\jmath \,k\, \htt} 
     \int_0^\infty h(\tau')\, \mathrm{e}^{-\jmath \, k\, \omega_0 \,\tau'}\,
     \hat{x}_k(\tau-\tau')\,\drm \tau' \label{8a}
\end{align}
The Fourier transformation of the convolution integral reads
\beq
\int_0^\infty h(\tau')\, \mathrm{e}^{-\jmath \, k\, \omega_0 \,\tau'}\,
     \hat{x}_k(\tau-\tau')\,\drm \tau' \leftrightarrow 
     H(\omega+k\, \omega_0) \, \hat{X}_k(\omega)
     \label{eq.8}
\eeq
where $h \leftrightarrow H,\, \hat{x}_k \leftrightarrow \hat{X}_k$
are the Fourier transform pairs.

\begin{theorem}
    \label{theo.3}
    Let $\omega(\tau)=\omega_0=\text{const.}$
    The equalities \eqref{eq.theo.1} and \eqref{eq.theo.2}
    are fulfilled if
	\beq
	 \hat{h}(\tau,\,\hat{t}) = h(\tau)\,\delta(\hat{t} - \omega_0\,\tau) =
	 \sum_k \frac{1}{2 \pi} h(\tau)\, \mathrm{e}^{-\jmath \, k\, \omega_0 \,\tau}
	 \,\mathrm{e}^{\jmath \,k\, \htt} \label{80}
	\eeq
\end{theorem}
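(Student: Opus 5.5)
The first equality in \eqref{80} is exactly the kernel of Theorem~\ref{theo.2}. So the claim that \eqref{eq.theo.1} and \eqref{eq.theo.2} hold is already proved there, and no new argument is needed for that part. What is genuinely new is the second equality, the Fourier series representation of $\hat{h}$ in the variable $\htt$. I would therefore establish the identity
\[
\delta(\htt-\omega_0\,\tau) = \frac{1}{2\pi}\sum_k \mathrm{e}^{\jmath\,k\,(\htt-\omega_0\,\tau)},
\]
understood in the sense of periodic distributions on $\TT$, and then multiply by $h(\tau)$. For fixed $\tau$ the factor $h(\tau)$ is just a scalar in $\htt$.

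To prove the identity, I would compute the Fourier coefficients of the periodic Dirac distribution centered at $\omega_0\,\tau$. By the sift property, $c_k = \frac{1}{2\pi}\int_\TT \delta(t'-\omega_0\,\tau)\,\mathrm{e}^{-\jmath\,k\,t'}\,\drm t' = \frac{1}{2\pi}\,\mathrm{e}^{-\jmath\,k\,\omega_0\,\tau}$, which gives the stated series formally. To make this rigorous, I would test against a smooth (or at least continuous, with absolutely summable Fourier coefficients) $2\pi$-periodic function $\phi$, expanded as $\phi(t')=\sum_k \phi_k\,\mathrm{e}^{\jmath k t'}$. Integrating the series term by term against $\phi(\htt - t')$ reproduces $\sum_k \phi_k\,\mathrm{e}^{\jmath k(\htt-\omega_0\tau)} = \phi(\htt-\omega_0\tau)$, which is exactly the sifting action. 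The two sides therefore agree as distributions on $\TT$.

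As a consistency check, and to connect with the Implementation section, I would insert the series form into the bivariate convolution \eqref{eq.5} together with the expansion \eqref{eq.6}. Orthogonality on $\TT$, $\int_\TT \mathrm{e}^{\jmath k(\htt-t')}\mathrm{e}^{\jmath l t'}\drm t' = 2\pi\,\delta_{kl}\,\mathrm{e}^{\jmath k\htt}$, cancels the factor $\frac{1}{2\pi}$ and yields precisely \eqref{8a}. This confirms that both forms of $\hat{h}$ produce the same $\hat{y}$, and hence satisfy \eqref{eq.theo.1} and \eqref{eq.theo.2}, by Theorem~\ref{theo.2} or directly.

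The main obstacle is justifying the interchange of the infinite sum with the integrals over $\TT$ and over $\tau'$, because the series for $\delta$ does not converge pointwise. I would handle it by working throughout in the distributional sense on $\TT$ and requiring $\hat{x}(\tau,\cdot)$ to have summable Fourier coefficients, uniformly in $\tau$. Then $\sum_k |\hat{x}_k(\tau)|$ is bounded, and with $h\in L^1$, dominated convergence allows the $\tau'$ integral to be exchanged with the sum over $k$.
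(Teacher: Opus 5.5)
Your argument is correct, but its main line is not the one the paper uses. The paper's proof of this theorem consists only of the computation you treat as a consistency check. It substitutes the series form of $\hat{h}$ and the expansion \eqref{eq.6} into \eqref{eq.5} and uses $\langle \mathrm{e}^{\jmath k t},\mathrm{e}^{\jmath l t}\rangle = 2\pi\,\delta_{kl}$. This lands on \eqref{8a}, the same $\hat{y}$ that the $\delta$-kernel produces via \eqref{eq.5b}.

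The first equality in \eqref{80} is not proved in that proof. It is deferred to Theorem~\ref{theo.4.2}, where the paper expands a Dirac comb of period $2\pi/\omega_0$ in the variable $\tau$ and rescales with $\delta(a\,t-t_0)=\frac{1}{|a|}\delta(t-t_0/a)$. You instead prove the kernel identity first, directly in the variable $\htt$ in which $\hat{h}$ acts in \eqref{eq.5}. You do this by computing the Fourier coefficients $\frac{1}{2\pi}\mathrm{e}^{-\jmath k\omega_0\tau}$ of the shifted periodic $\delta$, and then reduce the claim to Theorem~\ref{theo.2}.

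What your route buys:
\begin{itemize}
\item It gives a self-contained proof of both equalities in \eqref{80}, so the content of Theorem~\ref{theo.4.2} follows without any rescaling in $\tau$.
\item It states the sense in which the series kernel acts, namely term-by-term integration against $2\pi$-periodic functions.
\item It states the hypotheses that justify interchanging sum and integrals: $h\in L^1$ and Fourier coefficients of $\hat{x}(\tau,\cdot)$ summable uniformly in $\tau$. The paper leaves this implicit, and its standing assumption of mere continuity in Sec.~4 would not by itself guarantee convergence of the term-by-term series.
\end{itemize}

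The paper's route goes straight to the envelope formula \eqref{8a}, \eqref{eq.8} needed for the implementation. Read before the $\tau'$-integration, the same orthogonality calculation also verifies \eqref{eq.theo.2} pointwise in $\tau'$, without first establishing the identity of the two kernels.
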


\begin{proof}
 From the expansion \eqref{eq.6} and the orthogonality of the exponential function
 $<\mathrm{e}^{\jmath \,k\, t},\,\mathrm{e}^{\jmath \,l\, t}> \, = 2\,\pi\,\delta_{kl}$,
 where $< \cdot, \cdot >$ inner product and $\delta_{kl}$ is the Kronecker $\delta$ symbol,
 one obtains from \eqref{eq.5} immediately the integral \eqref{8a}.
\end{proof}

\begin{remark}
    In RF circuit design the transfer function often shows bandpass behavior,
    i.e.\ the spectrum is concentrated around $k=\pm 1$ and with \eqref{8a}
    \[
        \hat{y}(\tau,\htt) \approx \int_0^\infty h(\tau')\, 
        \mathrm{e}^{-\jmath \,( \omega_0 \,\tau' - \htt)}\,\hat{x}_1(\tau-\tau') +
     \mathrm{e}^{\jmath \,( \omega_0 \,\tau' - \htt)}\,\hat{x}_{-1}(\tau-\tau')\,\drm \tau'
    \]
    Since the modulated impulse response 
    $h(t)\,\mathrm{e}^{\mp\jmath \, \omega_0 \,t}
    \leftrightarrow H(\omega\pm \omega_0)$ exhibits lowpass, i.e.\ smooth, behavior, 
    the convolution integral \eqref{eq.8} can be calculated efficiently by a coarse
    discretization by, e.g., multistep integration methods.
    The equation \eqref{eq.8} shows similarity to the method of the equivalent 
    baseband  \cite{proakis2001digital} which is used in communication engineering
    for treating bandpass modulated signals efficiently, as considered in the next
    Section.
    However, the calculation of the analytical signal via the Hilbert transformation
    is not required.
\end{remark}

\begin{theorem}
   \label{theo.4.2}
   Let $\omega(\tau)=\omega_0=\text{const.}$  The equality
 	\beq
	 \hat{h}(\tau,\,\hat{t}) = 
	 \sum_k \frac{1}{2 \pi} h(\tau)\, \mathrm{e}^{-\jmath \,  k\, \omega_0 \,\tau}
	 \,\mathrm{e}^{\jmath \,k\, \htt} = h(\tau)\, \delta(\omega_0\,\tau - \htt) \no
	\eeq
holds.
\end{theorem}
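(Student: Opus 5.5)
The identity is, for each fixed $\tau$, the Fourier series of the $2\pi$-periodic Dirac distribution on $\TT$ shifted to the point $\omega_0\,\tau$ and multiplied by the scalar $h(\tau)$. I will therefore prove it distributionally on $\TT$ in the variable $\htt$, with $\tau$ treated as a parameter. The plan has three steps. First, expand the periodic delta. Second, insert the shift and the factor $h(\tau)$. Third, check that the result agrees with Theorem~\ref{theo.3} and with the sift property used in the proof of Theorem~\ref{theo.2}.

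\textbf{Step 1 (expansion of the periodic delta).} Compute the Fourier coefficients of $\delta$ on $\TT$ using the inner product $<\cdot,\cdot>$ from the proof of Theorem~\ref{theo.3}. We have $<\delta,\mathrm{e}^{\jmath k \htt}> = 1$ for every $k$. Dividing by the normalization $2\pi$ gives
\[
\delta(\htt)=\sum_k \tfrac{1}{2\pi}\,\mathrm{e}^{\jmath k \htt},
\]
with convergence in the sense of periodic distributions. To justify this, pair the partial sums (the Dirichlet kernels $D_K$) with a smooth $2\pi$-periodic test function $\phi$. Then $<D_K,\phi>$ is the $K$-th Fourier partial sum of $\phi$ evaluated at $0$. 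For smooth $\phi$ this partial sum converges to $\phi(0)$, because the Fourier coefficients decay rapidly.

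\textbf{Step 2 (shift, evenness and the factor $h$).} Replace $\htt$ by $\htt-\omega_0\tau$. This gives
\[
\delta(\htt-\omega_0\tau)=\sum_k \tfrac{1}{2\pi}\,\mathrm{e}^{-\jmath k\omega_0\tau}\,\mathrm{e}^{\jmath k\htt}.
\]
The periodic delta is even: substituting $k\to -k$ in the series shows that it is invariant under $\htt\to-\htt$. Hence $\delta(\htt-\omega_0\tau)=\delta(\omega_0\tau-\htt)$, which is the form written in the statement. Multiplying by $h(\tau)$, which does not depend on $\htt$, passes through the sum term by term and yields the claimed equality.

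\textbf{Step 3 (consistency check).} Insert the series into the inner cyclic convolution of \eqref{eq.5} and use the expansion \eqref{eq.6} together with orthogonality. This reproduces \eqref{8a}, and it matches the sift-property computation in Theorem~\ref{theo.2}. The check confirms that the two sides act identically on the envelopes $\hat{x}_k$.

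The main obstacle is rigor about convergence. The series does not converge pointwise, so the equality must be understood as an identity of distributions in $\htt$. For it to apply inside the convolution \eqref{eq.5}, the $\hat{x}(\tau-\tau',\cdot)$ must be smooth enough, or at least have absolutely summable Fourier coefficients. I would state that hypothesis explicitly. Alternatively, I would note that for merely continuous $\hat{x}$ the identity still holds after Fej\'er (Ces\`aro) summation of the series.
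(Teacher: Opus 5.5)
Your proof is correct, but it slices the identity differently from the paper.

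The paper treats $\sum_k \mathrm{e}^{-\jmath k\omega_0(\tau-\hat{t}/\omega_0)}$ as a Dirac comb in the variable $\tau$ with period $T=2\pi/\omega_0$. This gives the intermediate form $\frac{1}{\omega_0}h(\tau)\,\delta(\tau-\hat{t}/\omega_0)$. The paper then needs the scaling rule $\delta(a t-t_0)=\frac{1}{|a|}\delta(t-t_0/a)$ to reach $\delta(\omega_0\tau-\hat{t})$.

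You instead fix $\tau$ and work on $\TT$ in $\hat{t}$, where the period is already $2\pi$. A translation by $\omega_0\tau$ and the evenness of the periodic delta (reindexing $k\to-k$) then suffice, and no Jacobian factor appears.

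Your approach has three advantages:
\begin{itemize}
\item Your slicing is the one actually used in the sift-property computation of Theorem~\ref{theo.2}, which is a cyclic convolution in $\hat{t}$ for fixed $\tau'$.
\item Pairing the Dirichlet partial sums with smooth test functions supplies the distributional convergence argument that the paper leaves implicit.
\item It avoids the stray $\sum_k$ in the paper's final line, which has to be read as the periodization $\sum_n \delta(\omega_0\tau-\hat{t}-2\pi n)$.
\end{itemize}

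What the paper's $\tau$-slicing makes visible, and yours leaves implicit, is the kernel's shape as a function of $\tau$ for fixed $\hat{t}$. It is a train of impulses of weight $h(\tau)/\omega_0$ at $\tau=(\hat{t}+2\pi n)/\omega_0$. Both views define the same measure on $\R\times\TT$, supported on the curve $\hat{t}=\omega_0\tau \bmod 2\pi$. The factor $1/\omega_0$ is exactly the Jacobian that reconciles the two slicings.

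Your Step~3 is a useful sanity check against \eqref{8a}, but it is not needed for the identity itself.
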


\begin{proof}
 Since
 \[
   \delta(t) = \frac{1}{T}\sum_k \mathrm{e}^{-\jmath \, 2\pi \, k\, t/T}
 \]
 holds on $\TT$, one gets from \eqref{80} the identity
 \begin{align}
 \hat{h}(\tau,\,\hat{t}) &= 
	 \frac{1}{2 \pi} h(\tau)\sum_k  \mathrm{e}^{-\jmath \, k \,\omega_0 \, \tau}
	 \,\mathrm{e}^{\jmath \,k\, \htt} = \frac{1}{\omega_0} h(\tau)\,
	 \delta\left(\tau - \frac{\htt}{\omega_0} \right)	 \no\\
	 &= h(\tau)\sum_k \delta(\omega_0\,\tau - \htt)	 \no
 \end{align}
 where the latter equality results from the identity $\delta(a\,t - t_0) = \frac{1}{\abs{a}} 
 \delta\left(t - \frac{t_0}{a}\right)$.
\end{proof}


\section{Relation to the ECB method}
The ECB method is briefly revisited. For more details we refer to \cite{proakis2001digital}.
We consider (complex) baseband signals or envelopes $x_{BB}(t)$ and Radio Frequency
(RF) or bandpass signals $x_{RF}(t)$ and their relation.

\begin{remark}
    The ECB method as derived here ensures that the envelope and the RF signals
    have equal power. The scaling differs in literature.
\end{remark}

\subsection{Transformation of signals}

\subsubsection{Calculation of the baseband from the RF signal}
1.\ step: Hilbert transformation and the analytical signal: defining
\[
H_{\mathrm{RF}}^{\pm}(\omega) = \left(1 {\pm} \mathrm{sign}\left(\omega\right)\right)
\leftrightarrow h_{\mathrm{RF}}^{\pm}(t) = \delta(t) \pm \frac{\jmath}{\pi t} 
\]
where $\jmath = \sqrt{-1}$, $\mathrm{sign}(\cdot)$ the signum function,
one obtains the analytical signal by
\begin{align*}
  \no X_{\mathrm{RF}}^{\pm}\left(\omega\right)
  &=  H_{\mathrm{RF}}^{\pm}(\omega) \cdot X_{\mathrm{RF}}(\omega) = 
  \left(1 \pm \mathrm{sign}\left(\omega\right)\right) \cdot X_{\mathrm{RF}}\left(\omega\right)
     \leftrightarrow\\
  x_{\mathrm{RF}}^\pm(t) &=  x_{\mathrm{RF}}\left(t\right) \pm \jmath \, 
  x_{\mathrm{RF}}\left(t\right) * \frac{1}{\pi t}
    =:  x_{\mathrm{RF}}\left(t\right) \pm \jmath \, \mathcal{H}\left\{x_{\mathrm{RF}}\left(t\right)\right\} 
\end{align*}
where $\mathcal{H}\left\{x_{\mathrm{RF}}\left(t\right)\right\}$ is referred to as 
the Hilbert transformation.

2. step: calculation of the complex baseband or lowpass signal $x_{BB}(t)$ by modulation/frequency shift
of the analytical signal

\begin{align*}
  \no X_{BB}\left(\omega\right) &= \frac{1}{\sqrt{2}} \, X_{\mathrm{RF}}^{+}\left(\omega + \omega_0\right)\\
  & = \frac{1}{\sqrt{2}} \, \left(1 + \mathrm{sign}\left(\omega + \omega_0\right)\right) \cdot X_{\mathrm{RF}}\left(\omega + \omega_0\right) \leftrightarrow\\
  \no x_{BB}\left(t\right) &= \frac{1}{\sqrt{2}} \, x_{\mathrm{RF}}^{+}\left(t\right) 
  \cdot \mathrm{e}^{-\jmath \, \omega_0 \, t} \\
  & = \frac{1}{\sqrt{2}} \, \left(x_{\mathrm{RF}}\left(t\right) + \jmath \,
   \mathcal{H}\left\{x_{\mathrm{RF}}\left(t\right)\right\}\right) 
   \cdot \mathrm{e}^{-\jmath \, \omega_0 \, t}
\end{align*}

with inphase and quadrature phase components
\[
 X_{BB}(f) \leftrightarrow x_{BB}(t) = x_I(t) + \jmath\, x_Q(t)
\]

The reverse operation leads to the RF signal by
$x_{\mathrm{RF}}\left(t\right) = 
  \sqrt{2} \, \real{x_{BB}\left(t\right) \cdot \mathrm{e}^{\jmath \, \omega_0 \, t}}
$

\subsection{Transformation of LTI systems}

\begin{remark}
    The transformation of LTI systems is equivalent to signals besides
    a multiplicative constant which ensures that the output envelope and
    RF signals have equal powers.
\end{remark}

From $y_{\mathrm{RF}}\left(t\right)= x_{\mathrm{RF}}\left(t\right) * h_{\mathrm{RF}}\left(t\right)$
an ECB impulse response $h_{BB}(t)$ is required, where 
$y_{BB}\left(t\right) = x_{BB}\left(t\right)  * h_{BB}\left(t\right)$ is the ECB signal of $y_{\mathrm{RF}}$.
 The transformation of the impulse response
 $h_{\mathrm{RF}}(t)$ into the baseband reads
\begin{align*}
  \no H_{BB}\left(\omega\right) &= \frac{1}{2} \, 
  \left(1 + \mathrm{sign}\left(\omega + \omega_0\right)\right) 
  \cdot H_{\mathrm{RF}}\left(\omega + \omega_0\right) \leftrightarrow\\
  \no h_{BB}\left(t\right) &= \frac{1}{2} \, \left(h_{\mathrm{RF}}\left(t\right) + \jmath \, \mathcal{H}\left\{h_{\mathrm{RF}}\left(t\right)\right\}\right) \cdot
   \mathrm{e}^{-\jmath \, \omega_0 \, t}
\end{align*}

\begin{theorem}
    The ECB signal $y_{BB}$ of $y_{RF} = h_{RF} * x_{RF}$ is given by
    $y_{BB} = h_{BB} * x_{BB}$, where $x_{BB}$ and $h_{BB}$ are the equivalent baseband signals
    of $x_{RF}$ and $h_{RF}$ given above.
\end{theorem}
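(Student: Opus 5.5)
The plan is to work entirely in the frequency domain, where both convolutions become products and the ECB maps are explicit spectral operations. By the convolution theorem, $Y_{\mathrm{RF}}(\omega)=H_{\mathrm{RF}}(\omega)\,X_{\mathrm{RF}}(\omega)$. The definition of the ECB signal applied to $y_{\mathrm{RF}}$ then gives
\[
 Y_{BB}(\omega)=\tfrac{1}{\sqrt{2}}\bigl(1+\mathrm{sign}(\omega+\omega_0)\bigr)\,H_{\mathrm{RF}}(\omega+\omega_0)\,X_{\mathrm{RF}}(\omega+\omega_0).
\]
It therefore suffices to show that $H_{BB}(\omega)\,X_{BB}(\omega)$ equals this expression. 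The claim $y_{BB}=h_{BB}*x_{BB}$ then follows by applying the convolution theorem once more and inverting.

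The key step is to multiply the two given baseband spectra:
\[
 H_{BB}(\omega)X_{BB}(\omega)=\tfrac{1}{2}\cdot\tfrac{1}{\sqrt{2}}\bigl(1+\mathrm{sign}(\omega+\omega_0)\bigr)^2 H_{\mathrm{RF}}(\omega+\omega_0)X_{\mathrm{RF}}(\omega+\omega_0).
\]
We then use the idempotence-type identity $\bigl(1+\mathrm{sign}(\nu)\bigr)^2=2\bigl(1+\mathrm{sign}(\nu)\bigr)$. This holds for $\nu\neq 0$, since the factor is either $2$ or $0$. Substituting it, the prefactor $\tfrac12\cdot 2$ cancels. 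What remains is exactly $Y_{BB}(\omega)$ as computed above.

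This also explains the constants in the paper. The choice of the factor $\tfrac12$ for systems, as opposed to $\tfrac{1}{\sqrt{2}}$ for signals, is precisely what makes this cancellation work. That is the content of the preceding remark that LTI systems transform like signals up to a multiplicative constant.

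The main obstacle is the point $\nu=0$, i.e.\ $\omega=-\omega_0$, where $\mathrm{sign}(0)=0$. There $(1+0)^2=1\neq 2$, so the identity fails on this single frequency. I would handle this by noting that it is a set of measure zero. For $L^1$ (or $L^2$) spectra without a point mass at $\omega=\pm\omega_0$, the inverse transform is unaffected. Alternatively, one can adopt the convention that the analytic-signal multiplier is $2\cdot\mathbf{1}_{\omega>0}$, which is idempotent up to the factor $2$ everywhere. A secondary technical point is justifying the convolution theorem for $h_{\mathrm{RF}}$ and $x_{\mathrm{RF}}$, e.g.\ by assuming $h_{\mathrm{RF}}\in L^1$ and $x_{\mathrm{RF}}\in L^1\cap L^2$, as in the operator setting of Section~3. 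I would state these assumptions explicitly rather than prove them in general.
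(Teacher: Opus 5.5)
Your computation is correct. The paper gives no argument of its own here; it only refers to Proakis. So your proof is a self-contained alternative rather than a reconstruction of the paper's reasoning.

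The textbook proof uses the real-part representation:
\begin{itemize}
\item it writes $X_{\mathrm{RF}}$ and $H_{\mathrm{RF}}$ as the baseband spectrum shifted to $+\omega_0$ plus its conjugate mirror image at $-\omega_0$;
\item it multiplies the two spectra and discards the two cross terms, because their supports are disjoint;
\item it reads off the positive-frequency part.
\end{itemize}
Your identity $\bigl(1+\mathrm{sign}(\nu)\bigr)^2=2\bigl(1+\mathrm{sign}(\nu)\bigr)$ packages both facts into one line: the cross terms vanish, and the one-sided projector is idempotent up to the factor $2$. It also works directly with the paper's analytic-signal definitions.

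This has a concrete advantage. The paper's scaling differs from the textbook's, as its own remark concedes, so the citation has to be translated between conventions. Your computation checks the paper's constants exactly as stated. It also shows that only the system factor $\tfrac12$ is forced. The signal factor $\tfrac{1}{\sqrt2}$ could be any constant, because it multiplies both $x_{BB}$ and $y_{BB}$. So the equal-power remark explains the signal scaling, not the $\tfrac12$.

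Two small refinements.
\begin{itemize}
\item The only exceptional frequency is $\omega=-\omega_0$, which is zero frequency of the RF spectrum. A point mass there, i.e.\ a DC component of $x_{\mathrm{RF}}$ with $H_{\mathrm{RF}}(0)\neq 0$, genuinely breaks the identity: with $\mathrm{sign}(0)=0$ the two sides differ by a factor $2$. A point mass at $\omega=+\omega_0$, or a carrier tone in $x_{\mathrm{RF}}$, is harmless. So the natural hypothesis is ``no DC component''.
\item The Hilbert transform does not preserve $L^1$, so $h_{BB}$ need not be integrable even when $h_{\mathrm{RF}}$ is. If you also assume $h_{\mathrm{RF}}\in L^2$, then $h_{BB},x_{BB}\in L^2$. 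In that case $h_{BB}*x_{BB}$ is an absolutely convergent integral whose Fourier transform is $H_{BB}X_{BB}$, which is what your last inversion step needs.
\end{itemize}
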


\begin{proof}
 The proof can be found in \cite{proakis2001digital}.
\end{proof}

From \eqref{eq.8} and $k=1$ one can consider $\hat{X}_1(\omega) := X_{BB}(\omega)$ as the envelope or equivalent
baseband waveform and $H(\omega + \omega_0) := H_{BB}(\omega)$
 is the equivalent baseband transfer function
of the integro-differential multirate PDE. The method is therefore compatible with the ECB technique
and enables the coupled simulation of circuits and communication systems.

\subsection*{Conclusion}

The multirate PDE method has been generalized to integro-differential algebraic equations. 
Such
an integral term is required when the RF circuit contains, e.g.,
linear subcircuits or distributed devices described by their impulse responses
or vice versa in the frequency domain by their transfer functions.
The multirate solutions must coincide with the ordinary integro-DAE 
solutions
along the characteristic curves of the multirate integro-PDE. 
An ansatz via a multirate convolution integral or vice versa
a multivariate impulse resonse is considered.
First, by Theorem~\ref{theo.1}, a necessary condition is derived which must be fulfilled
by the multivariate impulse response. Second, a sufficient condition \eqref{70}
for the case of a fixed center frequency, which is the typical case in RF design, is deduced.
Third,
an alternative condition \eqref{80} is derived, which allows a run-time
efficient implementation and is similar to
the method of the Equivalent Complex Baseband method
in communication engineering, but does
not require a cumbersome Hilbert transformation. Moreover, in Theorem~\ref{theo.4.2},
it is shown that both formulations of the multivariate impulse responses are equivalent.
The technique enables time domain and spline/wavelet methods, 
Fourier or harmonic balance methods, coupled system and 
circuit/device simulation,
both in time and frequency domain.

\enlargethispage{\baselineskip}

\section{Acknowledgment}
In memory of Thomas Brazil, University of Dublin,
who posed this problem.

\bibliographystyle{plain}
\bibliography{./literature}  

\end{document}